\newtheorem{theorem}{Theorem}[section]
\newtheorem{proposition}[theorem]{Proposition}
\newtheorem{lemma}[theorem]{Lemma}
\theoremstyle{definition}
\newtheorem{definition}[theorem]{Definition}
\newtheorem{example}[theorem]{Example}
\newtheorem{remark}[theorem]{Remark}
\newtheorem{remarks}[theorem]{Remarks}
\newcommand{\F}{\mathbb{F}}
\newcommand{\N}{\mathbb{N}}
\newcommand{\Z}{\mathbb{Z}}
\newcommand{\wt}{\mathrm{wt}}
\newcommand{\mC}{\mathcal{C}}
\newcommand{\Fq}{\mathbb{F}_{q}}
\newcommand{\Fqn}{\mathbb{F}_{q}^{n}}
\newcommand{\Fqm}{\mathbb{F}_{q^{m}}}
\newcommand{\supp}{\mathrm{supp}}
\newcommand{\defect}{\mathrm{def}}
\newcommand{\Fqmn}{\mathbb{F}_{q^{m}}^{n}}
\newcommand{\dist}{\mbox{\rm d}}
\newcommand{\AMRD}{\mbox{\rm AMRD}}
\newcommand{\MRD}{\mbox{\rm MRD}}
\DeclareMathOperator{\rank}{rank}
\title{\bf On $q$-analog Steiner systems of rank metric codes}
\author{Francisco Arias$^1$,  Javier de la Cruz$^{1,2}\footnote{
This work was done while J. de la Cruz was at the University of
Zurich supported by the Swiss Confederation through the
Swiss Government Excellence Scholarship no. 2016.0873.
 The autor was partially supported by COLCIENCIAS through project no. 121571250178.}$,
Joachim Rosenthal$^2$\footnote{J. Rosenthal
was supported in part by the Swiss National Science Foundation under grant no. 169510.}
and Wolfgang Willems$^{1,3}$ \\

{\footnotesize ${}^1$}Universidad del Norte, Barranquilla, Colombia \\
{\footnotesize $^2$}University of Zurich,   Switzerland\\
{\footnotesize $^3$}Otto-von-Guericke Universit\"at, Magdeburg, Germany}
\begin{document}

\maketitle

\begin{abstract}
  In this paper we prove that rank metric codes with special
  properties imply the existence of $q$-analogs of suitable designs.
  More precisely, we show that the minimum weight vectors of a
  $[2d,d,d]$ dually almost MRD code $C\leq \Fqmn$ which has no code
  words of rank weight $d+1$ form a $q$-analog Steiner system
  $S_q(d-1,d,2d)$. In particular, $d+1$ must be a prime.
\end{abstract}
\textbf{Keywords:} Rank metric code, $q$-analog Steiner system, dually
AMRD code
\\\\
\textbf{Mathematics Subject Classification:} 94B05, 94B60, 05B25,
51E10

\section{Introduction}
The interest in $q$-analogs of codes and designs has been increased
over the last years due to their applications in random network
coding. One of the most challenging problems is the existence of
$q$-analogs of Steiner systems, in particular of the Fano plane.

The paper is structured as follows. In Section \ref{prelim} we collect
some facts on rank metric codes, in particular on generalized rank
weights.  Section \ref{Gauss} deals with Gaussian binomial
coefficients and cyclotomic polynomials.  In Section \ref{support} we
analyze the supports of the minimum weight vectors of a rank metric
code. Section \ref{steiner} deals with a relationship between rank
metric codes and $q$-analog designs. We prove that the minimum weight
vectors of a $[2d,d,d]$ dually almost MRD code $C\leq \Fqmn$ which has
no code words of rank weight $d+1$ hold a $S_q(d-1,d,2d)$ Steiner
system. In particular $d+1$ must be a prime. Note that apart from
trivial examples only $S_2(2,3,13)$ is known to exist \cite{Braun}.

\section{Preliminaries} \label{prelim}

In this paper we study $\F_{q^m}$-linear codes $ C \leq \F_{q^m}^n$
endowed with the rank metric distance.  To be more precise, note that
the field $\F_{q^m}$ may be viewed as an $m$-dimensional vector space
over $\F_q$. The {\it rank weight}, or briefly the {\it weight} of a
vector $v=(v_1,\dots,v_n) \in \F_{q^m}^n$ is defined as the maximum
number of coordinates in $v$ that are linearly independent over
$\F_q$, i.e., $\wt(v)= \dim_{\F_q}\langle v_1,\dots,v_n \rangle$. For
$v,u \in \F_{q^m}^n$ the rank metric distance is then given by
$ \dist(v,u) =\wt(u-v) =\rank(v-u). $

An $\F_{q^m}$-linear subspace $C \leq \F_{q^m}^n$ of dimension $k$
endowed with this metric is called an $[n,k]$ {\it $\Fqm$-linear rank
  metric} code.  As usual the minimum distance of $C \neq \{ 0\}$ is
defined by $$\dist = \dist(C)=\min \{ \wt(c) \mid 0\not=c \in C\}.$$
By $A_i(C)$ we always denote the code words of $C$ of weight $i$.
Finally, we use the notation $C^\perp$ for the orthogonal of $C$ which
is taken with respect to the standard inner product of $\Fqmn.$


Throughout the paper we always assume that $C \leq \F_{q^m}^n$ is an
$\F_{q^m}$-linear rank metric code with minimum distance $d$.
Furthermore we assume that $C$ is not trivial, i.e.,
$0 \not= C \not= \F_{q^m}^n$ and $n \leq m$. Thus, if $\dim C = k$,
then the last condition implies the Singleton bound
$$
d \leq n-k +1.
$$
$C$ is called a {\it maximum rank distance code}, shortly an $\MRD$
code, if the bound is achieved.  Delsarte \cite{Delsarte} and
independently Gabidulin \cite{Gabidulin} proved the existence of such
codes for all $q,m, n$ and dimension $1\leq k \leq n$ (here $n \leq m$
is not necessary). Given the parameters $q,m, n, k$, the code
$C\leq \Fqmn$ these authors describe has a particular construction
through a generator matrix $M_k(v)$ and the resulting code is usually
called a Gabidulin code. Recently other new constructions of MRD codes
have been found which are not equivalent to Gabidulin codes
(\cite{Willems-DelaCruz-K-W,sheekey}). Somehow surprisingly, over the
algebraic closure, the set of MRD codes forms a generic set inside the
Grassmann variety of all $k$-dimensional linear subspaces of
$\F_{q^m}^n$~\cite{ne17}. In particular over some large finite field
there exist large numbers of MRD codes and lower bounds on these
cardinalities can be found in~\cite{ne17}.

In analogy to the Singleton defect for classical codes as given in
\cite{debo96,Faldum-Willems}, we have the following definition for the
defect of rank metric codes \cite{DC-G-R-L}.

\begin{definition} \label{rkdef-Delsarte} The {\it rank defect},
  briefly the {\it defect}, of an $\Fqm$-linear $[n,k, d]$ rank metric
  code $C \leq \Fqmn$ is defined by $\defect(C)=n-k+1-d$.
\end{definition}

Note that $\defect(C)=0$ if and only if $C$ is an MRD code. Other
interesting codes which are coming close to MRD codes, are the
so-called {\it dually almost} MRD codes or simply {\it dually \AMRD}
codes \cite{DC}. More precisely, we say that a $\F_{q^m}$-linear rank
metric code $C$ is {dually AMRD} if
$\defect(C)=\defect(C^\perp)=1$. Dually AMRD codes are subject of the
main results in the last section of this paper. These codes can be
viewed as a $q$-analogon of a classical almost-MDS (AMDS) code and as
in the classical situation these codes induce again some $q$-Steiner
system.

Let $b_1, \ldots, b_m$ be a basis $B$ of $\F_{q^m}$ over $\F_q$.  For
$v =(v_1, \ldots,v_n) \in \F_{q^m}^n$ we write
$$
v_i= \sum_{j=1}^m \alpha_{ji}b_j
$$
and put $M_B(v) = (\alpha_{ji}) \in (\F_q)^{m \times n}.$ As mentioned
in (\cite{Relinde}, Section 2), the $K$-linear row space of $M_B(v)$
is independent of the chosen basis $B$.

In order to define generalized rank weights we need the following
notations \cite{H-M-R, Relinde}.

\begin{definition}
  For $v=(v_1, \ldots,v_n) \in \F_{q^m}^n$ and an $\F_{q^m}$-linear
  subspace $V$ of $\F_{q^m}^n$ we define
  \begin{itemize}
  \item[\rm a)] $\supp(v)$ as the $\F_q$-linear row space of $M_B(v)$.
  \item[\rm b)] $\supp(V) =\langle \supp(v) \mid v \in V \rangle$ as
    an $\F_q$-vector space.
  \item[\rm c)] $\wt(V) = \dim \supp(V).$
  \item[\rm d)] $ V^\star = \sum_{i=0}^{m-1} V^{q^i}.$
  \end{itemize}
\end{definition}

In the literature there are different definitions for generalized rank
weights (see \cite{Ogg},\cite{Kurihara-Matsumoto-Uyematsu},
\cite{Ducoat}, \cite{Relinde}).  All of them define the same
numbers. For our purpose the definition given in \cite{Relinde} seems
to be the most appropriate.

\begin{definition}
  The {\it $r$-th generalized rank weight } $\dist_r$ of a rank metric
  code $C \leq \F_{q^m}^n$ is defined by
$$
\dist_r(C) = \min_{D \leq C \atop \dim D =r} \wt(D).
$$
\end{definition}

Combining results of \cite{Kurihara-Matsumoto-Uyematsu},\cite{Ducoat}
and \cite{Relinde} we obtain the rank metric analog of Wei's result
\cite{Wei} on generalized Hamming weights.

\begin{theorem} \label{gen-weights} If $C$ is an $\F_{q^m}$-linear
  rank metric code in $\F_{q^m}^n$ of dimension $k$ and minimum
  distance $d$, then
$$
\dist(C) = \dist_1(C) < \dist_2(C) < \ldots < \dist_k(C).
$$
\end{theorem}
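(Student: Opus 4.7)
The plan is to reduce the theorem to two assertions: the equality $\dist(C) = \dist_1(C)$ and the strict monotonicity $\dist_r(C) < \dist_{r+1}(C)$ for $1 \leq r \leq k-1$. The equality is essentially definitional: for a one-dimensional subcode $D = \Fqm v$, the identity $\supp(\alpha v) = \supp(v)$ (for every $\alpha \in \Fqm^\times$) follows from expressing the rows of $M_B(\alpha v)$ as $\F_q$-combinations of the rows of $M_B(v)$ and then applying the same to $\alpha^{-1}$; hence $\supp(D) = \supp(v)$ and $\wt(D) = \wt(v)$, so $\dist_1(C) = \dist(C)$.

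For the strict inequality I would use a Wei-style hyperplane argument adapted to the rank metric. Fix an $(r+1)$-dimensional subcode $D \leq C$ with $\wt(D) = \dist_{r+1}(C)$ and set $W = \supp(D)$, so $\dim_{\F_q} W = \dist_{r+1}(C)$. The key structural fact I would invoke is that $W$ is the smallest $\F_q$-subspace of $\F_q^n$ such that $D$ is contained in the scalar extension $W \otimes_{\F_q} \Fqm \subseteq \Fqmn$. Granting this, pick any $\F_q$-hyperplane $W' \subsetneq W$ and set $D' := D \cap (W' \otimes_{\F_q} \Fqm)$. Working inside $W \otimes \Fqm$ (whose $\Fqm$-dimension equals $\dim_{\F_q} W$), the subspace $W' \otimes \Fqm$ is an $\Fqm$-hyperplane, so the standard dimension formula gives $\dim_{\Fqm} D' \geq r$; minimality of $W$ rules out $D \subseteq W' \otimes \Fqm$, hence $\dim_{\Fqm} D' = r$ exactly. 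Applying the structural fact to $D'$ yields $\supp(D') \subseteq W'$, so $\wt(D') \leq \dim_{\F_q} W' = \dist_{r+1}(C) - 1$, and therefore $\dist_r(C) \leq \wt(D') < \dist_{r+1}(C)$.

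The main obstacle I anticipate is verifying the structural characterization of $\supp(D)$. The inclusion $D \subseteq \supp(D) \otimes \Fqm$ is immediate from the decomposition $v = \sum_j b_j u_j$ with $u_j$ the rows of $M_B(v)$. Conversely, if $D \subseteq W \otimes \Fqm$ then any $v = \sum_i \alpha_i w_i \in D$ can be re-expanded in the basis $B$ to see that every row of $M_B(v)$ is an $\F_q$-combination of the $w_i$ and hence lies in $W$, giving $\supp(D) \subseteq W$. This minimality characterization is the content of the Galois-closure viewpoint underlying the $(\,\cdot\,)^\star$ operation recalled in the preliminaries and is implicit in the framework of \cite{Relinde}; once it is in hand, the hyperplane step becomes a direct transplant of Wei's argument in the Hamming setting.
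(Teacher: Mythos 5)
Your proof is correct, but it takes a genuinely different route from the paper's. The paper does not argue directly at all: it converts $\dist_r(C)$ through a chain of cited identities --- $\min \wt(D) = \min \dim D^\star$ (Jurrius--Pellikaan, Corollary 4.4), $= \min \max_{d\in D^\star}\wt(d)$ (their Theorem 5.8), $= \min\{\dim V \mid V=V^\star,\ \dim(C\cap V)\ge r\}$ (Ducoat), $= \mathcal{M}_r(C)$ --- and then quotes Lemma 9 of Kurihara--Matsumoto--Uyematsu for the strict monotonicity of the $\mathcal{M}_r(C)$. You instead give a self-contained Wei-style hyperplane argument resting on the observation that $\supp(D)$ is the minimal $\F_q$-subspace $W \le \F_q^n$ with $D \subseteq W\otimes_{\F_q}\Fqm$; your verification of this fact (writing $v=\sum_j b_j u_j$ with $u_j$ the rows of $M_B(v)$ for one inclusion, and re-expanding an arbitrary $\Fqm$-combination of elements of $W$ in the basis $B$ and using uniqueness of that expansion for the other) is sound, the dimension count $\dim_{\Fqm} D' \ge (r+1)+(w-1)-w = r$ with equality forced by minimality of $W$ is correct, and since $r\ge 1$ forces $\supp(D)\neq 0$, a hyperplane $W'$ always exists. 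What the paper's route buys is brevity plus, as a by-product, the equivalence of the competing definitions of generalized rank weights; what yours buys is a transparent, elementary proof that proves the stated theorem directly from the paper's own definition of $\dist_r$ without leaning on four external results, and that makes visible exactly why Wei's Hamming-metric argument transplants to the rank metric.
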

\begin{proof} We have
$$
\begin{array}{rcll}
  \dist_r(C)  & = &   \min\limits_{D \leq C \atop \dim D =r}  \wt(D)& \\
              & = &  \min\limits_{D \leq C \atop \dim D =r} \dim D^\star & (\cite{Relinde}, \text{Corollary 4.4}) \\
              & = & \min\limits_{D \leq C \atop \dim D =r} \max_{d \in D^\star} \wt(d) & (\cite{Relinde},\text{Theorem 5.8})\\
              & = & \min\limits_{V=V^\star \atop \dim (C \cap V) \geq r} \dim V & (\cite{Ducoat}, \text{Proposition II.1})\\
              & = & {\cal M}_r(C). & (\text{Definition 5 in \cite{Kurihara-Matsumoto-Uyematsu}})
\end{array}
$$
By (\cite{Kurihara-Matsumoto-Uyematsu}, Lemma 9) we get
$$
{\cal M}_1(C) < \ldots < {\cal M}_k(C),
$$ and the proof is complete since obviously $\dist(C) = \dist_1(C)$.
\end{proof}

\section{Gaussian binomial coefficients and cyclotomic
  poly\-nomials} \label{Gauss}

The results of this section are known but crucial for the rest of the
paper. Since they are hard to find in the literature we will state
them with proofs for the reader's convenience.

\begin{definition}
  Let $q$ be a prime power and let $a$ and $b$ be non-negative
  integers.  The $q$-ary Gaussian binomial coefficient of $a$ over $b$
  is defined by
 $$
 {{a \brack b}_q} = \left\{
   \begin{array}{cl}
     \frac{(q^a-1)(q^{a-1}-1)\ldots(q^{a-b+1}-1)}{(q^b-1)(q^{b-1}-1)\ldots(q-1)}
     & \mbox{ if } b \leq a \\ 0 & \mbox{ if } b > a
   \end{array}
 \right.
$$
\end{definition}

Throughout the paper we freely use the symmetry of the Gaussian
binomial coefficients; i.e., ${{a \brack b}_q}= {{a \brack a-b}_q}$
for $ b \leq a$.

Furthermore ${{a \brack b}_q}$ can be expressed by suitable
$\Phi_n(q)$ where $\Phi_n(x)$ denotes the $n$-th cyclotomic polynomial
defined by
$$
\Phi_n(x) = \prod_{1 \leq i \leq n \atop \gcd(j,n)=1} (x - \zeta_n^j)
$$
where $\zeta_n$ is a primitive complex $n$-th root of unity. Recall
that $\Phi_n(x)$ is an irreducible polynomial in $\Z[x]$. For
$n \in \N$ we put $[n] =\{1,2, \ldots, n\}$.

\begin{proposition} \label{factorization} For $b < a$ we have
$$
{{a \brack b}_q} = \prod_{j \in J_{a,b}} \Phi_j(q)
$$
where
$J_{a,b} = \{ j \in [a] \mid ((a-b) \bmod j) + (b \bmod j) \geq j \}$.
\end{proposition}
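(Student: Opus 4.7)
The plan is to reduce the factorization of ${a \brack b}_q$ to a computation of the multiplicity of each cyclotomic polynomial $\Phi_j(q)$, using the well-known identity $q^i - 1 = \prod_{d \mid i} \Phi_d(q)$.

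First, I would rewrite the Gaussian binomial coefficient in the symmetric form
$$
{a \brack b}_q = \frac{\prod_{i=1}^{a}(q^i-1)}{\prod_{i=1}^{b}(q^i-1)\cdot \prod_{i=1}^{a-b}(q^i-1)}
$$
by multiplying numerator and denominator of the defining expression by $\prod_{i=1}^{a-b}(q^i-1)$. Then, substituting $q^i - 1 = \prod_{d \mid i} \Phi_d(q)$ and counting how many times a given $\Phi_j$ appears, one sees that the multiplicity of $\Phi_j(q)$ in ${a \brack b}_q$ equals
$$
e_j \;=\; \left\lfloor \tfrac{a}{j} \right\rfloor - \left\lfloor \tfrac{b}{j} \right\rfloor - \left\lfloor \tfrac{a-b}{j} \right\rfloor,
$$
because $\Phi_j(q)$ divides $q^i-1$ exactly when $j \mid i$, and the number of multiples of $j$ in $\{1,\dots,N\}$ is $\lfloor N/j \rfloor$. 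Since the $\Phi_j(q)$ are pairwise coprime in $\Z[q]$ (they are distinct irreducible polynomials in $\Z[x]$), this identifies the full factorization once each $e_j$ is determined.

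The heart of the argument is then the elementary claim that $e_j \in \{0,1\}$ and equals $1$ precisely when $j \in J_{a,b}$. To see this, write $b = sj + r_1$ with $0 \le r_1 < j$ and $a-b = tj + r_2$ with $0 \le r_2 < j$, so that $a = (s+t)j + (r_1+r_2)$. Two cases arise: if $r_1 + r_2 < j$, then $\lfloor a/j \rfloor = s+t$ and $e_j = 0$; if $r_1 + r_2 \geq j$, then $a = (s+t+1)j + (r_1+r_2-j)$ with $0 \le r_1+r_2-j < j$, so $\lfloor a/j \rfloor = s+t+1$ and $e_j = 1$. Since $r_1 = b \bmod j$ and $r_2 = (a-b) \bmod j$, the latter case is exactly the defining condition of $J_{a,b}$.

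Combining the two steps yields
$$
{a \brack b}_q \;=\; \prod_{j=1}^{a} \Phi_j(q)^{e_j} \;=\; \prod_{j \in J_{a,b}} \Phi_j(q),
$$
as claimed. I do not foresee a real obstacle here; the only point that needs care is ensuring the range $j \in [a]$ is correct (for $j > a$ no $q^i - 1$ with $i \leq a$ is divisible by $\Phi_j(q)$, so $e_j = 0$ automatically, and for $j = 1$ one checks that $r_1 = r_2 = 0$ so $j=1 \notin J_{a,b}$, consistent with the fact that the $(q-1)$'s cancel in the symmetric expression above).
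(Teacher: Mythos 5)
Your proof is correct and follows essentially the same route as the paper: the paper cites (Chen--Hou, Lemma 1) for the exponent formula $e_j = \lfloor a/j\rfloor - \lfloor b/j\rfloor - \lfloor (a-b)/j\rfloor$, which you instead derive directly from $q^i-1=\prod_{d\mid i}\Phi_d(q)$, and the remainder argument showing $e_j\in\{0,1\}$ with $e_j=1$ exactly on $J_{a,b}$ is identical to the paper's.
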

\begin{proof} By (\cite{Chen}, Lemma 1), we have
$$
{{a \brack b}_q} = \prod_{j=1}^a \Phi_j(q)^{ \lfloor
  \frac{a}{j}\rfloor - \lfloor \frac{b}{j}\rfloor - \lfloor
  \frac{a-b}{j}\rfloor}.
$$
Furthermore, since
$$
0 \leq \lfloor \frac{a}{j}\rfloor - \lfloor \frac{b}{j}\rfloor -
\lfloor \frac{a-b}{j}\rfloor \leq 1
$$
we obtain
$$
{{a \brack b}_q} = \prod_{j \in J} \Phi_j(q)
$$
where
$J = \{ j \in [a] \mid \lfloor \frac{a}{j}\rfloor = \lfloor
\frac{b}{j}\rfloor + \lfloor \frac{a-b}{j}\rfloor + 1\}$.
Thus we need to show that $J = J_{a,b}$. If we write
$a = \lfloor \frac{a}{j} \rfloor j + r_a $ with $ 0 \leq r_a < j$ and
similarly $b$ and $a-b$ we get
$$
a = \Big(\Big\lfloor \frac{b}{j} \Big\rfloor + \Big\lfloor
\frac{a-b}{j} \Big\rfloor\Big)j + r_b + r_{a-b}.
$$
Thus $ j \in J$ if and only if
$$
r_b + r_{a-b} -j = r_a \geq 0
$$
if and only if
$$
r_b + r_{a-b} \geq j .
$$
The last condition says nothing else than
$$
(b \bmod j) + ((a-b) \bmod j) \geq j.
$$
\end{proof}

\begin{lemma} \label{$J_{a,b}$} Let $a,d \in \N$.
  If $p$ is a prime with $p\mid d+1$ and $p \mid c $, then
  $c \not\in J_{d+p, p-1}$.
\end{lemma}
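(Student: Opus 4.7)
The plan is to apply the definition of $J_{a,b}$ directly with $a=d+p$ and $b=p-1$, so that $a-b=d+1$. Then $c\in J_{d+p,p-1}$ is equivalent to
$$\bigl((d+1)\bmod c\bigr)+\bigl((p-1)\bmod c\bigr)\geq c,$$
and my goal is to derive the opposite strict inequality under the hypotheses $p\mid d+1$ and $p\mid c$. The case $c>d+p$ is handled trivially because membership in $J_{d+p,p-1}\subseteq[d+p]$ fails, so I may assume $c\leq d+p$.

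Using the two divisibilities, I would write $c=kp$ and $d+1=\ell p$ with positive integers $k,\ell$. The inequality $kp=c\leq d+p=\ell p+(p-1)$ rearranges to $k\leq\ell+\tfrac{p-1}{p}$, and since $k$ and $\ell$ are integers and $p\geq 2$, this forces $k\leq\ell$; in particular $c\leq d+1$.

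Now I compute the two remainders separately. Since $c=kp\geq p>p-1\geq 0$, one has $(p-1)\bmod c=p-1$. For the other term, the Euclidean division $\ell=qk+r$ with $0\leq r\leq k-1$ yields
$$(d+1)\bmod c=\ell p\bmod kp=rp\leq(k-1)p=c-p.$$
Summing, $((d+1)\bmod c)+((p-1)\bmod c)\leq(c-p)+(p-1)=c-1<c$, so $c\notin J_{d+p,p-1}$ as required. The argument is essentially a bookkeeping exercise, so I do not anticipate a serious obstacle; the one step worth being careful about is the passage from $k\leq\ell+\tfrac{p-1}{p}$ to $k\leq\ell$, which genuinely requires $p\geq 2$ (hence the primality of $p$ is used only through $p\neq 1$).
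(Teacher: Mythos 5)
Your proof is correct and rests on the same key observation as the paper's: since $p\mid c$ and $p\mid d+1$, the remainder $(d+1)\bmod c$ is a multiple of $p$ strictly below $c=kp$, hence at most $c-p$, while $(p-1)\bmod c=p-1$; the paper packages this as a contradiction whereas you compute the bound directly. The approaches are essentially identical, and your extra step showing $k\leq\ell$ is harmless but not actually needed.
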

\begin{proof}
  Write $d+1 = x c +r$ with $x \in \N$ and $0 \leq r < c$. Since
  $ p \mid d+1$ and $p\mid c$ we have $ p \mid r$. Suppose that
  $c\in J_{d+p,p-1}$.  Thus
$$ ((d+1) \bmod c) + ((p-1) \bmod c) \geq c.$$
This implies that $ r + (p-1) \geq c$, hence $ c > r \geq c -p+1$.
Thus we obtain $ r = c -p+i$ where $i\in \{1, \ldots, p-1$, which is a
contradiction since $ p\mid r$ and $p\mid c$.
\end{proof}


\begin{lemma} \label{ge} Let $p$ be a prime and $c \in \N$. If
  $\gcd(\Phi_p(q),\Phi_c(q)) > 1$, then $p\mid c$.
\end{lemma}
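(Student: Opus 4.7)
The plan is to extract a common prime divisor $\ell$ of $\Phi_p(q)$ and $\Phi_c(q)$ and to analyze the multiplicative order of $q$ modulo $\ell$. Since $\Phi_n(x)$ divides $x^n-1$, any such $\ell$ divides both $q^p-1$ and $q^c-1$, so $e=\ord_\ell(q)$ divides $\gcd(p,c)$. As $p$ is prime, only two options occur: either $e=p$, in which case $e\mid c$ yields $p\mid c$ at once, or $e=1$, i.e.\ $q\equiv 1\pmod{\ell}$.

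In the second case, reducing the identity $\Phi_p(q)=1+q+\cdots+q^{p-1}$ modulo $\ell$ gives $\Phi_p(q)\equiv p\pmod{\ell}$, so $\ell\mid p$ forces $\ell=p$. I would then invoke the classical evaluation of cyclotomic polynomials at $1$: for $n\geq 2$, $\Phi_n(1)=r$ when $n=r^k$ is a prime power (with $r$ prime and $k\geq 1$), and $\Phi_n(1)=1$ otherwise. This follows from $x^n-1=\prod_{d\mid n}\Phi_d(x)$ by cancelling $\Phi_1(x)=x-1$ and letting $x\to 1$, giving $n=\prod_{d\mid n,\, d>1}\Phi_d(1)$, from which an induction on $n$ pins down each value.

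Combining $\Phi_c(q)\equiv \Phi_c(1)\pmod{p}$ with $p\mid \Phi_c(q)$ forces $p\mid \Phi_c(1)$. For $c\geq 2$ the evaluation above then requires $c=p^k$ with $k\geq 1$, whence $p\mid c$ and we are done. The main pitfall to watch for is the degenerate case $c=1$, where $\Phi_1(q)=q-1$ and $\gcd(\Phi_p(q),q-1)=\gcd(p,q-1)$ can equal $p$ while $p\nmid 1$; this shows the statement implicitly assumes $c\geq 2$, which is indeed the setting in which the lemma is applied afterwards. Apart from this edge case no heavy machinery is needed---the whole argument rests on the order-of-$q$ trick together with the evaluation of $\Phi_n$ at $1$.
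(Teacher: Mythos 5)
Your proof is correct, but it reaches the conclusion by a genuinely different route than the paper. Both arguments begin from the observation that a common divisor of $\Phi_p(q)$ and $\Phi_c(q)$ divides $\gcd(q^p-1,q^c-1)$; from there the paper applies the identity $\gcd(q^p-1,q^c-1)=q^{\gcd(p,c)}-1$ globally, so that $p\nmid c$ gives $\gcd(q^p-1,q^c-1)=q-1=\Phi_1(q)$, and then cancels the common factor $\Phi_1(q)$ from the factorizations $q^p-1=\Phi_1(q)\Phi_p(q)$ and $q^c-1=\Phi_1(q)\prod_{1\neq t\mid c}\Phi_t(q)$ to conclude that the remaining cofactors are coprime. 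You instead work locally at a common prime divisor $\ell$, split according to $\ord_\ell(q)\in\{1,p\}$, and in the order-one case identify $\ell=p$ and appeal to the values $\Phi_c(1)$. Your version buys two things: it actually proves the stronger statement that $c$ must be a power of $p$, and it makes explicit the degenerate case $c=1$, where the lemma as literally stated fails (e.g.\ $q=3$, $p=2$, $c=1$ gives $\gcd(\Phi_2(3),\Phi_1(3))=\gcd(4,2)=2>1$ but $2\nmid 1$); the paper's argument silently breaks there as well, since for $c=1$ the product $\prod_{1\neq t\mid c}\Phi_t(q)$ is empty and $\Phi_c(q)=\Phi_1(q)$ is exactly the cancelled factor. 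This is harmless for the paper because in the only application (Theorem \ref{d+1}) the integer $c$ lies in $J_{d+p,p-1}$, which never contains $1$, but your restriction to $c\geq 2$ should be stated. The price of your route is the need for the classical evaluation of $\Phi_n(1)$, which you justify adequately.
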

\begin{proof}
  The assumption $\gcd(\Phi_p(q),\Phi_c(q)) > 1$ implies that
  $\gcd(q^p-1,q^c-1) > 1$. From finite field theory we know that
$$
\gcd(q^p-1,q^c-1) = q^{\gcd(p,c)}-1.
$$
Thus, if $p \nmid c$, then $\gcd(q^p-1,q^c-1) = q-1 =
\Phi_1(q)$. Since
$$
q^p-1 = \Phi_1(q) \Phi_p(q)
$$
and
$$
q^c-1 = \Phi_1(q)\prod_{1\not=t \mid c} \Phi_t
$$
we obtain $\gcd(\Phi_p(q),\Phi_c(q))=1$, a contradiction.
\end{proof}

\section{Supports of the minimum weight vectors} \label{support}

From paper \cite{Relinde} we know the following facts.

\begin{lemma} \label{prop-supp} Let $ C \leq \F_{q^m}^n$ be an
  $\F_{q^m}$-linear rank metric code.
  \begin{itemize}
  \item[\rm a)] If $u=\alpha v$ for some $\alpha \in \Fqm$, then
    $\supp(v)=\supp(u)$.
  \item[\rm b] If $v_1, \ldots, v_k \in \Fqmn$ generate $C$,
    then $$\supp(C)=\sum_{i=1}^k \supp(v_i).$$
  \item[\rm c)] There exists an element $c\in C$ such
    that $$\supp(c)=\supp(C).$$
  \item[\rm d)] For $u,v \in \F_{q^m}^n$ there exist
    $\alpha, \beta \in \Fqm$ such that
    $\supp(\alpha v+\beta u)=\supp(v) +\supp(u)$.
  \end{itemize}
\end{lemma}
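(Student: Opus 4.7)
My plan is to handle the parts in the order (a), (b), (d), (c), since (a) and (b) are routine matrix bookkeeping, (d) is the sole technical step, and (c) follows from (b) and (d) by iteration.

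For (a) I would invoke the regular representation of $\Fqm$ on itself with respect to the fixed basis $B=\{b_1,\dots,b_m\}$: the map $\alpha\mapsto B_\alpha$ sending each scalar to its multiplication matrix embeds $\Fqm$ into $M_m(\Fq)$, with $B_\alpha\in\GL_m(\Fq)$ whenever $\alpha\ne 0$. A direct computation from the definition of $M_B$ gives $M_B(\alpha v)=B_\alpha M_B(v)$, so the row spaces coincide and $\supp(\alpha v)=\supp(v)$. Part (b) then follows: the containment $\supp(v_i)\subseteq\supp(C)$ is immediate from the definition; conversely, any $c\in C$ has the form $c=\sum\alpha_i v_i$ with $\alpha_i\in\Fqm$, so $M_B(c)=\sum B_{\alpha_i}M_B(v_i)$ and every row of $M_B(c)$ lies in the sum of the row spaces of the $B_{\alpha_i}M_B(v_i)$, which by (a) equals $\sum_i\supp(v_i)$.

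Part (d) is the heart of the lemma and where I expect the main obstacle to lie. Let $W=\supp(v)+\supp(u)$ of $\Fq$-dimension $r$. Combining subadditivity of support under vector addition with (a) shows $\supp(\alpha v+\beta u)\subseteq W$ for all $\alpha,\beta$, with equality exactly when $\wt(\alpha v+\beta u)=r$. My strategy is to bound the bad set $S=\{(\alpha,\beta)\in\Fqm^2:\wt(\alpha v+\beta u)<r\}$ by stratifying according to the proper $\Fq$-subspaces $U\subsetneq W$ in which the support could sit. Each stratum is cut out by the $\Fq$-linear conditions stating that every row of $B_\alpha M_B(v)+B_\beta M_B(u)$ lies in $U$, hence is an $\Fq$-subspace of $\Fqm\times\Fqm\cong\Fq^{2m}$; a union bound over the finitely many such $U$ should then give $|S|<q^{2m}$ and so produce a good pair. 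The delicate point is verifying that each stratum is a \emph{proper} subspace of $\Fq^{2m}$; this amounts to showing that for each $U\subsetneq W$ there exist $\alpha,\beta$ with some row of $B_\alpha M_B(v)+B_\beta M_B(u)$ outside $U$, which uses $r=\dim W$ and (a) to rule out the automatic containments.

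Finally, part (c) is established inductively from (b) and (d): given generators $v_1,\dots,v_k$ of $C$, apply (d) to $v_1,v_2$ to find $w_2\in\langle v_1,v_2\rangle_{\Fqm}\subseteq C$ with $\supp(w_2)=\supp(v_1)+\supp(v_2)$; apply (d) again to $w_2,v_3$ to obtain $w_3\in C$ with $\supp(w_3)=\supp(v_1)+\supp(v_2)+\supp(v_3)$; iterate to produce $c=w_k\in C$ with $\supp(c)=\sum_i\supp(v_i)$, which equals $\supp(C)$ by (b).
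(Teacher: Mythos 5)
The paper itself offers no argument for this lemma --- it simply cites Propositions 2.3, 3.6 and 3.9 of Jurrius--Pellikaan --- so you are attempting strictly more than the authors do. Your parts (a) and (b) are correct: $M_B(\alpha v)=B_\alpha M_B(v)$ with $B_\alpha$ invertible for $\alpha\neq 0$ gives (a) (the lemma tacitly assumes $\alpha\neq 0$), and (b) follows as you describe. Your reduction of (c) to (b) and (d) by iterating along a generating set is also correct.

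The gap is in the counting step of part (d). You stratify the bad set $S$ by the proper subspaces $U\subsetneq W$ and assert that once each stratum $S_U=\{(\alpha,\beta):\supp(\alpha v+\beta u)\subseteq U\}$ is known to be a \emph{proper $\F_q$-subspace} of $\F_q^{2m}$, a union bound yields $|S|<q^{2m}$. It does not: a proper $\F_q$-subspace may have $q^{2m-1}$ elements, and even restricting to the hyperplanes $U$ of $W$ (which suffices) there are $(q^{r}-1)/(q-1)\geq q^{r-1}$ of them, so your bound is $q^{r-1}\cdot q^{2m-1}=q^{2m+r-2}\geq q^{2m}$ whenever $r\geq 2$ --- which is precisely the nontrivial case. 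The repair is to prove the stronger fact that each $S_U$ is an $\F_{q^m}$-subspace of $\F_{q^m}^2$, hence of size at most $q^{m}$ when proper. This follows from the dual description of the support: for $x\in\F_q^n$ one has $M_B(w)x^{T}=0$ if and only if $w\cdot x=0$, so $\supp(w)\subseteq U$ if and only if $w\cdot x=0$ for all $x\in U^{\perp}$, an $\F_{q^m}$-linear condition on $w=\alpha v+\beta u$. Concretely, for a hyperplane $U$ of $W$ choose $x_0\in U^{\perp}\setminus W^{\perp}$; then $S_U=\{(\alpha,\beta):\alpha(v\cdot x_0)+\beta(u\cdot x_0)=0\}$ with $(v\cdot x_0,u\cdot x_0)\neq(0,0)$, i.e.\ a line over $\F_{q^m}$. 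This settles your ``delicate point'' (properness is immediate) and gives $|S|\leq\frac{q^{r}-1}{q-1}\,q^{m}<q^{m}\cdot q^{m}=q^{2m}$, using $r\leq n\leq m$, so a good pair $(\alpha,\beta)$ exists and the rest of your argument goes through.
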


\begin{proof} a) and b) are part of Proposition 2.3 of \cite{Relinde}.
  c) is Proposition 3.6 and d) Proposition 3.9 of the same paper.
\end{proof}

\begin{definition}
  For an $\Fqm$-linear rank metric code $C \leq \Fqmn$ of dimension
  $k$ and minimum distance $d$ we put
$$ D_i(C) = \{ \supp(c) \mid c \in C, \  \wt(c) = i\}$$
for $i=0,d, \ldots, n-k+1$.
\end{definition}

\begin{lemma}\label{number-block} Let $C \leq \Fqmn$ be an
  $\Fqm$-linear rank metric code with minimum distance $d$.
  \begin{itemize}
  \item[\rm a)] Let $v,u \in C$ and $\wt(v)=\wt(u)=d$. Then
    $\supp(v)=\supp(u)$ if and only if there exists
    $\alpha \in \Fqm^\star$ such that $u=\alpha v$.
  \item[\rm b)] $|D_d(C)|=\frac{A_d(C)}{q^m-1}$.
  \end{itemize}
\end{lemma}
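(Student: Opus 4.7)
The plan is to handle part (a) first and then deduce part (b) from it by a simple fiber-counting argument.

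For part (a), the easy direction ($\Leftarrow$) follows immediately from Lemma \ref{prop-supp}(a). For the forward direction, I would argue by contradiction: assume $\supp(v)=\supp(u)$ but $v$ and $u$ are $\Fqm$-linearly independent. Then $D = \langle v,u\rangle_{\Fqm}$ is a $2$-dimensional $\Fqm$-subspace of $C$. On one hand, Lemma \ref{prop-supp}(b) gives $\supp(D) = \supp(v)+\supp(u) = \supp(v)$, so $\wt(D) = \dim\supp(v) = d$. On the other hand, Theorem \ref{gen-weights} yields $\wt(D) \geq d_2(C) > d_1(C) = d$, a contradiction. Hence $v$ and $u$ must be linearly dependent over $\Fqm$, and since both are nonzero this means $u = \alpha v$ for some $\alpha\in\Fqm^\star$.

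For part (b), consider the map $\varphi\colon \{c\in C : \wt(c)=d\} \to D_d(C)$ defined by $\varphi(c)=\supp(c)$. By the definition of $D_d(C)$ this map is surjective. By part (a), the fiber $\varphi^{-1}(\supp(v))$ over any $v$ with $\wt(v)=d$ consists precisely of the nonzero scalar multiples $\{\alpha v : \alpha\in\Fqm^\star\}$, which has cardinality $q^m-1$. Therefore $A_d(C) = (q^m-1)\,|D_d(C)|$, which rearranges to the desired identity.

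The only real content is in the forward implication of (a); the main obstacle is recognizing that the strict monotonicity of the generalized rank weights (Theorem \ref{gen-weights}) is exactly the tool that forces any two minimum-weight codewords with identical support to be proportional. Once that is in place, part (b) is a routine orbit-counting consequence.
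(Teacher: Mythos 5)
Your proof is correct and follows essentially the same route as the paper: the forward direction of (a) is obtained by forming the two-dimensional subspace $\langle v,u\rangle$, applying Lemma \ref{prop-supp}(b) to see its weight is $d$, and contradicting the strict monotonicity of generalized rank weights from Theorem \ref{gen-weights}. Your explicit fiber-counting for (b) merely spells out what the paper dismisses as immediate from (a).
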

\begin{proof} a) One direction follows by Lemma \ref{prop-supp} a).
  Suppose $\supp(v)=\supp(u)$ and $v, u$ linearly independent over
  $\Fqm$. Let $W=\langle v,u\rangle$ as a vector space over $\Fqm$.
  By Lemma \ref{prop-supp} b), we get
  $\supp(W)=\supp(v)+\supp(u)=\supp(v).$
  Therefore $$\wt(W)=\dim_{\Fq}(\supp(W))=\dim_{\Fq}(\supp(v))=d.$$
  Thus, according to the definition of generalized rank weights we
  obtain
$$
d_2(C)=\min \{ \wt_R(S) \mid S \leq C \;\textrm{and}\; \dim_{\Fqm}S=2\}=d,
$$
which contradicts Theorem \ref{gen-weights}.\\
b) This immediately follows from part a).
\end{proof}

\section {q-analog Steiner systems and rank metric
  codes} \label{steiner}

Maximum distance separable (MDS) codes are $[n,k,d]$ linear codes
$C\leq \Fqn$ which reach the Singleton bound $d=n-k+1$. Almost-MDS
(AMDS) codes were introduced by de Boer~\cite{debo96} and they are
characterized that their Singleton defect is one, i.e.  $d=n-k$.

In \cite{Faldum-Willems} it has been shown that the supporters of code
words of minimum weight of a $[2d,d,d]$ dually AMDS code ($d \geq 2$)
which has no code words of weight $d+1$ form the blocks of an
$S(d-1,d,2d)$ classical Steiner system and $d+1$ must be a prime.  For
instance, in this way the extended ternary Golay code leads to an
$S(5,6,12)$ Steiner system. In this section we prove the $q$-analog of
this result.

\begin{definition}
  Let $t \leq k \leq n$ be natural numbers. A $q$-Steiner system
  $S_q(t,k,n)$ is a set of $k$-dimensional subspaces of $\F_q^n$,
  called the blocks, such that every $t$-dimensional subspace of
  $\F_q^n$ is contained in exactly one block.
\end{definition}

Note that the number of blocks of an $S_q(t,k,n)$ Steiner system is
$\frac{{{n \brack t}_q}}{{{k \brack t}_q}}$.

\begin{lemma} \label{st-ab} A $S_q(t,k,n)$ Steiner system implies an
  $S_q(t-1,k-1,n-1)$ Steiner system if $t \geq 2$.
\end{lemma}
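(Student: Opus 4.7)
\medskip

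\noindent \textbf{Proof sketch.} The plan is to mimic the classical derived design construction in the quotient space. Fix any $1$-dimensional subspace $P$ of $\F_q^n$ and consider the projection $\pi : \F_q^n \to \F_q^n/P$, where the codomain is an $(n-1)$-dimensional $\F_q$-vector space. I would define the proposed derived blocks to be
$$
\mathcal{B}_P = \{\, B/P \mid B \text{ is a block of } S_q(t,k,n) \text{ and } P \leq B \,\},
$$
each of which is a $(k-1)$-dimensional subspace of $\F_q^n/P \cong \F_q^{n-1}$.

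The next step is to verify the Steiner property for $\mathcal{B}_P$. Let $U$ be any $(t-1)$-dimensional subspace of $\F_q^n/P$, and let $\widetilde U = \pi^{-1}(U) \leq \F_q^n$, which is $t$-dimensional and contains $P$. By the Steiner property of $S_q(t,k,n)$, there is a unique $k$-dimensional block $B$ with $\widetilde U \leq B$. Since $P \leq \widetilde U \leq B$, the block $B$ contributes a member $B/P \in \mathcal{B}_P$ that contains $U$, giving existence. For uniqueness, if $B_1/P$ and $B_2/P$ in $\mathcal{B}_P$ both contain $U$, then $B_1, B_2$ both contain $\widetilde U$ (by taking preimages), so $B_1 = B_2$ by uniqueness in the original system; hence $B_1/P = B_2/P$. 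This establishes that $\mathcal{B}_P$ is an $S_q(t-1,k-1,n-1)$ Steiner system in $\F_q^n/P$.

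There is no real obstacle here: the hypothesis $t\ge 2$ is used only to guarantee that $t-1 \geq 1$, so the derived parameters are meaningful, and the identification $\F_q^n/P \cong \F_q^{n-1}$ (as abstract $\F_q$-vector spaces) is what lets us state the conclusion in the required form. The only point requiring a moment of care is the bijection between $t$-dimensional subspaces of $\F_q^n$ containing $P$ and $(t-1)$-dimensional subspaces of $\F_q^n/P$, which is just the standard correspondence theorem for subspaces; once this is in place, existence and uniqueness of the derived block are both immediate consequences of existence and uniqueness in the parent Steiner system. \qepd
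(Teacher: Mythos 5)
Your proof is correct: it is the standard derived subspace design construction (fix a point $P$, take the blocks through $P$, and pass to the quotient $\F_q^n/P$), and the correspondence-theorem bijection between $t$-subspaces containing $P$ and $(t-1)$-subspaces of the quotient does exactly the work you describe. The paper itself does not argue this but simply cites (\cite{KL}, Lemma 5), whose content is precisely this construction, so your write-up is a valid self-contained proof along the same lines as the cited source.
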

\begin{proof}
  This is one part of (\cite{KL}, Lemma 5).
\end{proof}

\begin{theorem}\label{design-2d-d-d}
  Let $C \leq \Fqm^{2d}$ be a $[2d,d,d]$ dually AMRD code with
  $d \geq 2$ and $A_{d+1}(C)=0$. Then the set $D_d(C)$ are the blocks
  of an $S_q(d-1,d,2d)$ Steiner system.
\end{theorem}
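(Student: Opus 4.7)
The plan is to establish the Steiner property in three stages: first prove that no $(d-1)$-dimensional subspace lies in two blocks, next compute $|D_d(C)|$ exactly, and then conclude by double counting.

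\textbf{Step 1 (Uniqueness).} Fix a $(d-1)$-dimensional subspace $U\leq \Fq^{2d}$ and suppose $V_1,V_2\in D_d(C)$ are distinct and both contain $U$. Since $\dim V_i=d$ and $\dim U=d-1$, and $V_1\neq V_2$, we must have $\dim(V_1\cap V_2)=d-1$, hence $\dim(V_1+V_2)=d+1$. Pick $c_i\in C$ with $\supp(c_i)=V_i$. Lemma \ref{prop-supp}(d) supplies $\alpha,\beta\in\Fqm$ with
$$
\supp(\alpha c_1+\beta c_2)=\supp(c_1)+\supp(c_2)=V_1+V_2,
$$
so $\wt(\alpha c_1+\beta c_2)=d+1$, contradicting $A_{d+1}(C)=0$. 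Thus every $(d-1)$-subspace of $\Fq^{2d}$ is contained in at most one block of $D_d(C)$.

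\textbf{Step 2 (Counting $|D_d(C)|$).} By Lemma \ref{number-block}(b), $|D_d(C)|=A_d(C)/(q^m-1)$, so it suffices to compute $A_d(C)$ exactly. The hypothesis $A_{d+1}(C)=0$ and $\defect(C)=1$ force the weights of $C$ to lie in $\{0,d,d+2,d+3,\ldots,2d\}$, while $\defect(C^\perp)=1$ forces the weights of $C^\perp$ to lie in $\{0,d,d+1,\ldots,2d\}$. I would invoke Delsarte's MacWilliams transform for the rank-metric weight enumerators, together with the $d-1$ vanishing conditions $A_j(C^\perp)=0$ for $1\le j\le d-1$, the normalization $A_0(C)=1$, the total count $\sum_i A_i(C)=q^{md}$, and the absence of weight $d+1$, to pin down the full rank-distribution of $C$. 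The expected outcome is
$$
A_d(C)=(q^m-1)\,\frac{{\,{2d\brack d-1}_q\,}}{{\,{d\brack d-1}_q\,}},
$$
which is exactly the number of $d$-subspaces required by a putative $S_q(d-1,d,2d)$.

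\textbf{Step 3 (Double counting).} Counting incidences $\{(U,V):\dim U=d-1,\ V\in D_d(C),\ U\le V\}$ from the block side gives $|D_d(C)|\cdot {d\brack d-1}_q$, which by Step 2 equals ${2d\brack d-1}_q$, i.e., the total number of $(d-1)$-subspaces of $\Fq^{2d}$. Step 1 says each $(d-1)$-subspace contributes at most one to this count, so in fact each contributes exactly one. Hence every $(d-1)$-subspace of $\Fq^{2d}$ lies in a unique block of $D_d(C)$, and $D_d(C)$ is an $S_q(d-1,d,2d)$ Steiner system.

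\textbf{Main obstacle.} The only non-routine step is Step 2: actually solving the MacWilliams system to get the stated closed form for $A_d(C)$. The defect-one hypotheses on both $C$ and $C^\perp$, together with $A_{d+1}(C)=0$, must be leveraged carefully to make the linear system determined; this is where the rank-metric analog of the AMDS calculation of Faldum--Willems has to be carried out explicitly, and presumably where the Gaussian-binomial factorizations from Section~\ref{Gauss} enter the picture.
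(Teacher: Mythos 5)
Your argument follows the paper's proof step for step: the uniqueness argument via Lemma~\ref{prop-supp}(d) and $A_{d+1}(C)=0$, the reduction $|D_d(C)|=A_d(C)/(q^m-1)$ via Lemma~\ref{number-block}(b), and the concluding double count are all identical to what the paper does. The one place you stop short is Step~2, where you state the expected value of $A_d(C)$ without deriving it; this is exactly the point at which the paper invokes Theorem~25 of \cite{DC-G-R-L}, which gives $A_d(C)=\frac{{{2d \brack d+1}_q}}{{{d \brack 1}_q}}(q^m-1)$ for a $[2d,d,d]$ dually AMRD code with $A_{d+1}(C)=0$, and by symmetry of the Gaussian coefficients this is precisely your claimed formula. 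So your outline is correct and complete modulo that citation; the MacWilliams-type computation you anticipate as the ``main obstacle'' is indeed how that weight-distribution result is established, but the paper does not redo it and simply quotes it.
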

\begin{proof}
  (i) Let $ W \leq \F_q^{2d}$ be of dimension $d-1$. Suppose that $W$
  is contained in two different blocks, i.e., elements of $D_d(C)$.
  Hence $$ W \subseteq \supp(u) \cap \supp(v)$$ with
  $\supp(u), \supp(v) \in D_d(C)$. Since
  $ \dim \, (\supp(u) \cap \supp(v)) \leq d-1$ we obtain
$$
W = \supp(u) \cap \supp(v).
$$
Thus
$$
\dim \, (\supp(u)+ \supp(v)) =2d -(d-1) = d+1.
$$
By Lemma \ref{prop-supp} d) there are $\alpha, \beta \in \F_{q^m}$
such that
$$
\supp(u) + \supp(v) =\supp(\alpha u + \beta v).
$$
Thus $\alpha u + \beta v \in C$ has weight $d+1$, a contradiction.
This means that every $(d-1)$-dimensional subspace of $\F_q^{2d}$ is contained in at most one block. \\
(ii) According to Lemma \ref{number-block} b) we have
$|D_d(C)| = \frac{A_d(C)}{q^m -1}$. Since $A_{d+1}(C)=0$, Theorem 25
of \cite{DC-G-R-L} yields
$$
A_d(C) = \frac{ {{2d \brack d+1}_q}}{{{d \brack 1}_q}}(q^m-1) = \frac{
  {{2d \brack d-1}_q}}{{{d \brack d-1}_q}}(q^m-1),
$$
hence $|D_d(C)| = \frac{ {{2d \brack d-1}_q}}{{{d \brack d-1}_q}}$.
Since each block contains exactly ${{d \brack d-1}_q}$ subspaces of
dimension $(d-1)$ and every $(d-1)$-dimensional subspace is contained
in at most one block by (i), the blocks altogether contain
$$
|D_d(C)|{{d \brack d-1}_q} = {{2d \brack d-1}_q}
$$
subspaces of dimension $d-1$. As ${{2d \brack d-1}_q}$ is the number
of $(d-1)$-dimensional subspaces in a space of dimension $2d$, the
proof is complete.
\end{proof}

\begin{remark}
  Let $C \leq \Fqm^{2d}$ be a $[2d,d,d]$ dually AMRD code with
  $d \geq 2$ and $A_{d+1}(C)=0$. Then $C^\perp$ also leads to an
  $S_q(d-1,d,2d)$ Steiner system, since $C$ is formally self-dual, by
  (\cite{DC}, Lemma 4.11).
\end{remark}

\begin{example}
  Let $C$ be the $\F_{2^4}$-linear $[4,2,2]$ code with generator
  matrix $$\left(
    \begin{array}{ccll}
      0& 1 & \omega & 0 \\
      1& 0 & 0 & \omega \\
    \end{array}
  \right) $$
  where $\omega$ is a primitive third root of unity in
  $\F_{2^4}^\star$.  With {\sc Magma} \cite{Magma} we get
  $A_0(C)= A_0(C^\perp)=1$, $A_2(C) = A_2(C^\perp)=75$,
  $A_3(C) = A_3(C^\perp)=0$ and $A_4(C) = A_4(C^\perp)=180$. Thus $C$
  is a $[4,2,2]$ dually almost MRD code over $\F_{2^4}$. Consequently,
  by Theorem \ref{design-2d-d-d} the elements of $D_d(C)$ are the
  blocks of an $S_2(4,2,1)$ Steiner system. Note that this $2$-Steiner
  system is one of the trivial ones.
\end{example}

\begin{remarks} a) According to Theorem \ref{design-2d-d-d} a
  $[8,4,4]$ dually AMRD code over $\F_{2^8}$ with $A_5(C)=0$ implies
  the existence of a Steiner system $S_2(3,4,8)$.  Thus, by Lemma
  \ref{st-ab}, the existence of the code would imply the existence of
  an $S_2(2,3,7)$ Steiner system which is
  the $2$-analog of the Fano plane. \\
  b) By Theorem \ref{design-2d-d-d} and Lemma \ref{st-ab}, a
  $[2d,d,d]$ dually AMRD code over $\F_{q^m}$ with $d \geq 2$ and
  $A_{d+1}(C)=0$ implies an $S_q(1,2,d+2)$ Steiner system. It follows
  that $q^2-1 \mid q^{d+2} -1$.  Thus $d$ must be even.
\end{remarks}

\begin{theorem} \label{d+1} Let $C \leq \Fqm^{2d}$ be a $[2d,d,d]$
  dually AMRD code with $d \geq 2$ and $A_{d+1}(C)=0$. Then $d+1$ is a
  prime.
\end{theorem}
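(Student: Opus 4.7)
The plan is to argue by contradiction: assume $d+1$ is composite and derive a contradiction. The case $d=2$ gives $d+1=3$, already prime, so assume $d \geq 3$ and fix a prime divisor $p$ of $d+1$ with $2 \leq p \leq d$.

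First, by Theorem \ref{design-2d-d-d} the code $C$ yields an $S_q(d-1,d,2d)$ Steiner system. Iterating Lemma \ref{st-ab} a total of $d-p$ times produces an $S_q(p-1,p,d+p)$ Steiner system; at every derivation step the first parameter stays $\geq p \geq 2$, so the hypothesis of the lemma is satisfied throughout the chain.

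Second, I count its blocks. An $S_q(p-1,p,d+p)$ Steiner system contains
\[
\frac{{{d+p \brack p-1}_q}}{{{p \brack p-1}_q}} \;=\; \frac{{{d+p \brack p-1}_q}}{\Phi_p(q)}
\]
blocks, using ${{p \brack p-1}_q} = {{p \brack 1}_q} = \frac{q^p-1}{q-1} = \Phi_p(q)$, the last equality holding precisely because $p$ is prime (so $q^p-1 = \Phi_1(q)\Phi_p(q)$). This number must be a positive integer, so $\Phi_p(q)$ divides ${{d+p \brack p-1}_q}$ as integers.

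Third, the three results of Section \ref{Gauss} combine to rule this out. By Proposition \ref{factorization},
\[
{{d+p \brack p-1}_q} \;=\; \prod_{j \in J_{d+p, p-1}} \Phi_j(q).
\]
Since $p \mid d+1$, Lemma \ref{$J_{a,b}$} says no multiple of $p$ lies in $J_{d+p,p-1}$; Lemma \ref{ge} then gives $\gcd(\Phi_p(q),\Phi_j(q))=1$ for every $j \in J_{d+p,p-1}$. Hence $\gcd\bigl(\Phi_p(q), {{d+p \brack p-1}_q}\bigr) = 1$, which together with $\Phi_p(q) \geq q+1 \geq 3$ contradicts the divisibility obtained in the second step.

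The main obstacles are bookkeeping rather than conceptual: verifying that the chain of $d-p$ derivations really stays in the admissible range $t \geq 2$, and handling the edge cases $d=2$ and $p=d+1$ (both of which already give $d+1$ prime). The conceptual core is to route the existence of the block design through the prime $p \mid d+1$ so that Lemma \ref{$J_{a,b}$} can eliminate every potential $p$-divisible contributor to ${{d+p \brack p-1}_q}$.
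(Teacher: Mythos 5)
Your proposal is correct and follows essentially the same route as the paper: derive the $S_q(d-1,d,2d)$ system from Theorem \ref{design-2d-d-d}, reduce via Lemma \ref{st-ab} to an $S_q(p-1,p,d+p)$ system for a proper prime divisor $p$ of $d+1$, and then use the integrality of the block count together with Proposition \ref{factorization}, Lemma \ref{ge} and Lemma \ref{$J_{a,b}$} to reach a contradiction. The only cosmetic difference is that you phrase the final step as $\gcd\bigl(\Phi_p(q),{{d+p \brack p-1}_q}\bigr)=1$ contradicting divisibility, whereas the paper extracts a single $c\in J_{d+p,p-1}$ with $\gcd(\Phi_p(q),\Phi_c(q))>1$ and contradicts its membership in $J_{d+p,p-1}$ -- logically the same argument.
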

\begin{proof} Let $p$ be a prime with $p \mid d+1 \not= p$, hence
  $d+1 =px$ with $x \geq 2$.  By Theorem \ref{design-2d-d-d}, there
  exists a Steiner system $S_q(d-1,d,2d)$. Since $p-1 \leq d-1$ Lemma
  \ref{st-ab} implies the existence of an $S_q(p-1,p,d+p)$ Steiner
  system.  This Steiner system has exactly
  $$\frac{{{d+p \brack p-1}_q}}{{{p \brack p-1}_q}} = \frac{{{d+p
        \brack p-1}_q}}{{{p \brack 1}_q}} \in \N $$
  blocks.  According to Proposition \ref{factorization} we obtain
$$
\frac{{{d+p \brack p-1}_q}}{{{p \brack 1}_q}} = \frac{ \prod_{j \in
    J_{d+p,p-1}} \Phi_j(q)}{ \prod_{j \in J_{p,1} }\Phi_j(q) } =
\frac{ \prod_{j \in J_{d+p,p-1}} \Phi_j}{ \Phi_p(q) } \in \N.
$$
Thus exists a $c \in J_{d+p,p-1}$ such that
$1 < \gcd(\Phi_p(q),\Phi_c(q))$.  Lemma \ref{ge} implies that
$ p \mid c$ and according to Lemma \ref{$J_{a,b}$} we get
$ c \not\in J_{d+p,p-1}$, a contradiction. Thus $ d+1 = p$ and we are
done.
\end{proof}

\end{document}